\documentclass[9pt,reqno]{amsart}

\usepackage[T1]{fontenc}
\usepackage{lmodern}
\usepackage{microtype}

\usepackage[margin=1in]{geometry}

\usepackage{amsmath,amssymb,amsfonts,amscd,amsthm}
\usepackage{mathtools}
\usepackage{mathrsfs}
\usepackage{bm}
\usepackage{esint}
\usepackage{aliascnt}

\usepackage{graphicx}
\usepackage{float}
\usepackage[font=small,labelfont=bf]{caption}
\usepackage{subcaption}

\usepackage{tikz}
\usetikzlibrary{calc}

\usepackage{pgfplots}
\usepgfplotslibrary{groupplots}
\pgfplotsset{compat=1.18}

\graphicspath{{figures/}{images/}}

\usepackage{enumitem}
\usepackage{comment}
\usepackage[normalem]{ulem}
\usepackage{xcolor}

\usepackage[
    colorlinks=true,
    linkcolor=blue!60!black,
    citecolor=blue!60!black,
    urlcolor=blue!60!black
]{hyperref}

\usepackage[nameinlink,capitalize,noabbrev]{cleveref}

\numberwithin{equation}{section}

\theoremstyle{plain}
\newtheorem{theorem}{Theorem}[section]

\newaliascnt{proposition}{theorem}
\newtheorem{proposition}[proposition]{Proposition}
\aliascntresetthe{proposition}

\newaliascnt{lemma}{theorem}
\newtheorem{lemma}[lemma]{Lemma}
\aliascntresetthe{lemma}

\newaliascnt{corollary}{theorem}
\newtheorem{corollary}[corollary]{Corollary}
\aliascntresetthe{corollary}

\newaliascnt{conjecture}{theorem}

\aliascntresetthe{conjecture}

\theoremstyle{definition}
\newaliascnt{definition}{theorem}

\aliascntresetthe{definition}

\newaliascnt{assumption}{theorem}

\aliascntresetthe{assumption}

\newaliascnt{example}{theorem}

\aliascntresetthe{example}

\theoremstyle{remark}
\newaliascnt{remark}{theorem}

\aliascntresetthe{remark}

\crefname{theorem}{Theorem}{Theorems}
\crefname{proposition}{Proposition}{Propositions}
\crefname{lemma}{Lemma}{Lemmas}
\crefname{corollary}{Corollary}{Corollaries}
\crefname{definition}{Definition}{Definitions}
\crefname{assumption}{Assumption}{Assumptions}
\crefname{example}{Example}{Examples}
\crefname{remark}{Remark}{Remarks}
\crefname{equation}{Equation}{Equations}
\crefname{section}{Section}{Sections}

\newcommand{\R}{\mathbb{R}}

\newcommand{\N}{\mathbb{N}}

\newcommand{\F}{\mathcal{F}}
\newcommand{\G}{\mathcal{G}}

\newcommand{\J}{\mathcal{J}}
\newcommand{\K}{\mathcal{K}}

\newcommand{\ep}{\varepsilon}
\newcommand{\D}{\Delta}
\newcommand{\p}{\partial}

\newcommand{\defeq}{\vcentcolon=}

\DeclarePairedDelimiter{\norm}{\lVert}{\rVert}

\DeclarePairedDelimiterX{\inner}[2]{\langle}{\rangle}{#1,#2}

\title{A Short Proof of Optimal Regularity for minimizers of the Alt-Phillips Problem}

\author{Kunyi (Mark) Ma}
\address{Department of Mathematics, Columbia University, New York, NY 10027}
\email{km4046@columbia.edu}

\date{\today}

\subjclass[2020]{Primary 35R35; Secondary 35J60, 49J40}

\keywords{Free boundary problem, Alt--Phillips functional, variational methods}

\begin{document}

\begin{abstract}
We give a self-contained short proof of optimal regularity for minimizers of the Alt-Phillips Free Boundary Problem for $\gamma \in (0, 1)$. We adopt a dichotomy argument that originates from \cite{DS20}.

\end{abstract}

\maketitle

\tableofcontents

\section{Introduction}

Throughout the article we let
\[
    \gamma\in(0,1),
    \qquad
    \beta=\frac{2}{2-\gamma}\in(1,2).
\]

Let $B_1\subseteq\R^d$ denote the unit ball, and let
$u\in H^1(B_1)$ be a minimizer of the Alt--Phillips functional
\[
    \J^\gamma(u,B_1)
    \defeq
    \int_{B_1}\bigl(|\nabla u|^2+u_+^\gamma\bigr)\,dx
\]
among functions with prescribed non-negative boundary data. It has been known
since the work of Phillips \cite{Phi83} that minimizers satisfy the optimal
regularity
\[
    u\in C^{1,\beta-1}_{\mathrm{loc}}(B_1).
\]
For related developments on Alt--Caffarelli--Phillips type free boundary
problems, we refer to \cite{Caf98} and \cite{PSU12} for the obstacle problem corresponding to $\gamma=1$; for the one-phase Bernoulli problem
corresponding to $\gamma=0$, see \cite{CS05,Vel23}; and for the general
Alt--Phillips functional with $\gamma\in(0,2)$, see \cite{AP86}.

The purpose of this short article is to give a self-contained and pedagogical
proof of the optimal regularity for $\gamma\in(0,1)$. The proof is based
on a dichotomy argument inspired by \cite{DS20}, which originally applies to almost-minimizers. See also \cite{ara25} for a similar problem.

Roughly
speaking, the dichotomy says for the $L^2$ average of $u$ large enough, either the $\beta$-rescaled average
decays by $1/2$, or the
minimizer is close to a positive constant. Iterating this yields
either $\beta$-growth from a free-boundary point or, after stopping, a
Harnack-type estimate in a ball contained in the positivity set, with radius comparable to the pointwise value of $u$. The
optimal $C^{1,\beta-1}$ regularity then follows by combining the $\beta$-growth rate from the free boundary, and Harnack inequality in the positive set.

\section{Hölder Regularity}
In this section we prove a preliminary Hölder estimate that applies to minimizers of small
perturbations of the Dirichlet energy, whose potential has at most quadratic
growth. The proof follows Campanato's approach. 
\begin{proposition}\label{prop:Hölder_proposition}
    For any $\alpha \in (0, 1),$ and $K \geq 1$, there exists $\delta_0 \in (0,1)$ and $C >0$ depending only on $d, \alpha, K$ s.t. for any $u \in H^1(B_1)$, 
     and 
    \[\fint_{B_1} u^2 \leq 1,\] that minimizes
    \[\F_{\delta}(v, B_1) = \int_{B_1}|\nabla v|^2 + \delta F(x,v), \qquad 0 \leq F(x, v) \leq K(1 + v^2), \qquad 0 < \delta \leq \delta_0,\]
   among $v \in H^1(B_1)$ with same trace as $u$,
    then one has 
    \[\norm{u}_{C^{0, \alpha}(\overline{B_{1/2}})} \leq C.\]
\end{proposition}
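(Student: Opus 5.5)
The estimate will follow from Campanato's characterization of Hölder spaces: I will show a uniform Morrey-type decay for the gradient,
\[
\int_{B_r(x_0)} |\nabla u|^2 \le C r^{d-2+2\alpha}
\qquad\text{for all } x_0\in B_{1/2},\ r\le 1/4 .
\]
Poincaré's inequality then gives $\fint_{B_r(x_0)}|u-\bar u_{x_0,r}|^2\le C r^{2\alpha}$, so Campanato's theorem yields the $C^{0,\alpha}(\overline{B_{1/2}})$ bound. The $L^\infty$ part of the norm is controlled by the averages together with the $L^2$ bound $\fint_{B_1} u^2\le 1$.

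\textbf{Step 1: energy at unit scale.} Since $u$ minimizes, test against $u\varphi$-type competitors, or more simply use a Caccioppoli inequality. Comparing $u$ with $u-\eta^2(u-k)$ for a cutoff $\eta$ gives
\[
\int_{B_{3/4}}|\nabla u|^2\le C\Bigl(\int_{B_1}u^2+\delta K\bigl(1+\textstyle\int_{B_1}u^2\bigr)\Bigr)\le C .
\]
This uses $F\ge 0$ and $F\le K(1+v^2)$ on the competitor, whose $L^2$ norm is controlled by that of $u$.

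\textbf{Step 2: one-step decay by harmonic replacement.} Fix $B_r=B_r(x_0)\subset B_{3/4}$ and let $h$ be harmonic in $B_r$ with $h=u$ on $\partial B_r$. Minimality of $u$ against $h$ (extended by $u$) and $F\ge0$ give
\[
\int_{B_r}|\nabla(u-h)|^2=\int_{B_r}|\nabla u|^2-|\nabla h|^2\le \delta\int_{B_r}F(x,h)\le \delta K\int_{B_r}(1+h^2).
\]
For harmonic $h$ we have $\int_{B_\rho}|\nabla h|^2\le (\rho/r)^d\int_{B_r}|\nabla h|^2$. Hence
\[
\int_{B_\rho}|\nabla u|^2\le C_0\Bigl[(\rho/r)^d+\delta K\Bigr]\int_{B_r}|\nabla u|^2+C\delta K\bigl(r^d+\textstyle\int_{B_r}h^2\bigr).
\]

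\textbf{Step 3: the main obstacle, the zeroth-order term $\int_{B_r}h^2$.} This term is not scale invariant, and $u$ is not yet known to be bounded. I would handle it by subtracting the mean. Write $\int_{B_r}h^2\le 2\int_{B_r}(h-m)^2+2|B_r|m^2$ with $m=\bar u_{x_0,r}$. The first piece is bounded by $C r^2\int_{B_r}|\nabla u|^2$ via Poincaré and the maximum principle (or the $H^1$ bound on $h-m$). The mean satisfies $m^2\le C\,(1+\sum_k\text{oscillation increments})$ through the standard telescoping of averages across dyadic scales, which is bounded once decay at rate $\alpha$ is known up to that scale. Equivalently, I would run a bootstrap/induction on dyadic scales $r_k=\tau^k/4$. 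The induction hypothesis is
\[
\int_{B_{r_j}}|\nabla u|^2\le M r_j^{d-2+2\alpha}\quad\text{for } j\le k .
\]
This gives $|m_{r_j}|\le C(M)$ for $j\le k$, since the sum $\sum r_j^\alpha$ converges. The extra term is then $\le C\delta K(1+C(M))r^d$, which is better than $r^{d-2+2\alpha}$ because $\alpha<1$.

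\textbf{Step 4: choice of parameters and closing the induction.} Choose $\tau$ with $2C_0\tau^{d}\le \tfrac12\tau^{d-2+2\alpha}$, possible since $\alpha<1$. Then choose $M$ large relative to the Step 1 constant. Finally choose $\delta_0$ small so that both $C_0\delta K\le \tau^d$ and $C\delta K(1+C(M))\le \tfrac12 M\tau^{d-2+2\alpha}r_0^{\,2-2\alpha}$. This closes the induction with constants depending only on $d,\alpha,K$. Intermediate radii are handled by monotonicity of $\rho\mapsto\int_{B_\rho}|\nabla u|^2$, and Campanato's theorem finishes the proof.
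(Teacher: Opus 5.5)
Your argument is correct, but it takes a different route from the paper.

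\textbf{Your route.} You prove a Morrey bound for the Dirichlet energy, $\int_{B_r(x_0)}|\nabla u|^2\le Cr^{d-2+2\alpha}$, working in the original variables. Caccioppoli gives the starting bound at unit scale, and harmonic replacement on $B_r(x_0)$ gives the one-step decay. The non-scale-invariant term $\delta K\int_{B_r}h^2$ is split into two parts. The oscillation part is absorbed via Poincar\'e, using $h-u\in H^1_0(B_r)$. The mean part is controlled by the induction hypothesis, through telescoping of dyadic averages. The slack for absorbing $C\delta K r^d(1+m^2)$ comes from $r^d\ll r^{d-2+2\alpha}$, i.e.\ from $\alpha<1$. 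A minor slip: since $m^2$ enters, the extra term is really $C\delta K(1+C(M)^2)r^d$. This is harmless because $C(M)^2\lesssim 1+M$.

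\textbf{The paper's route.} The paper never uses the gradient beyond the unit scale. Its one-step lemma does harmonic replacement on $B_1$, gets $L^2$-closeness from Poincar\'e on $w-h\in H^1_0(B_1)$, and takes $m=h(0)$. The iteration runs on the rescaled, recentered functions $u_k=\rho^{-k\alpha}(u(\rho^k\cdot)-p_k)$ with potentials $F_k(x,z)=\rho^{2k(1-\alpha)}F(\rho^kx,p_k+\rho^{k\alpha}z)$. The zeroth-order difficulty is absorbed into the structural bound $F_k\le\Lambda(1+z^2)$, using $|p_k|\le R=M/(1-\rho^\alpha)$ and $\rho^{2k(1-\alpha)}\le 1$.

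\textbf{Comparison.} Both proofs share the same core mechanism: the constants approximating $u$ stay bounded because their increments form a geometric series, and $\alpha<1$ supplies the room to absorb the potential.

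The paper packages this so the lemma is a scale-invariant black box applied at unit scale. It needs no Caccioppoli step, everything is phrased in $L^2$ to match the hypothesis $\fint_{B_1}u^2\le 1$, and the same template is reused in the dichotomy proposition.

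Your version is the classical Giaquinta-style argument. It needs the extra Caccioppoli step and explicit bookkeeping of the means inside the induction. In exchange it yields a slightly stronger conclusion, the Morrey estimate for $\nabla u$.
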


First we need the following.
\begin{lemma}\label{lem:Hölder_lemma}
    For any $\alpha \in (0, 1)$, and $\Lambda \geq 1$,  there exists 
        \[
        \rho=\rho(d,\alpha)\in(0,1/2),\qquad
        M=M(d)\geq 1,\qquad
        \delta_1=\delta_1(d,\alpha,\Lambda)\in(0,1),
    \]
    such that  for any $w \in H^1(B_1)$,
    \begin{equation}\label{eq:Hölder_lemma_assumption}
        \fint_{B_1} w^2 \leq1 ,
    \end{equation}
    that minimizes 
    \[\G_\delta(v,B_1) = \int_{B_1} |\nabla v|^2 + \delta G(x, v), \qquad 0 \leq G(x, v) \leq \Lambda(1 + v^2), \qquad 0 < \delta \leq \delta_1,\]
     among $v \in H^1(B_1)$ with same trace as $w$, there exists $m \in \R$ s.t. $ |m| \leq M$ and 
     \begin{equation}\label{eq:Hölder_lemma}
         \frac{1}{\rho^{2\alpha}}\fint_{B_\rho} |w - m|^2  \leq 1.
     \end{equation}
\end{lemma}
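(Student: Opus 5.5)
We argue by compactness and contradiction, comparing $w$ with the harmonic replacement $h$ of $w$ in $B_1$. The order of choices matters: $\rho$ and $M$ are fixed from harmonic function theory first, and only afterwards $\delta_1$ is chosen small.

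\emph{Step 1: fix $\rho$ and $M$.} If $h$ is harmonic in $B_1$ with $\fint_{B_1} h^2 \le 4$, interior estimates give $\sup_{B_{1/2}} |\nabla h| \le C_0(d)$ and $|h(0)| \le C_0(d)$. Set $M = 2C_0(d)$, which depends only on $d$. Taking $m = h(0)$, for $\rho < 1/2$ we get
\[
\fint_{B_\rho} |h - h(0)|^2 \le C_0^2 \rho^2 .
\]
Since $\alpha < 1$, we can choose $\rho = \rho(d,\alpha)$ so small that $C_0^2 \rho^{2-2\alpha} \le 1/4$. Then $\rho^{-2\alpha}\fint_{B_\rho}|h-h(0)|^2 \le 1/4$.

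\emph{Step 2: the energy comparison.} Let $h$ be the harmonic function with the same trace as $w$. Minimality of $w$ against $h$, and the Dirichlet principle for $h$, give
\[
\int_{B_1} |\nabla (w-h)|^2 = \int_{B_1} |\nabla w|^2 - \int_{B_1} |\nabla h|^2 \le \delta \int_{B_1} \bigl(G(x,h) - G(x,w)\bigr) \le \delta\Lambda \int_{B_1} (1 + h^2).
\]
The first equality holds because $h$ is harmonic and $w-h \in H^1_0$.

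The main obstacle is that we have no direct $L^2$ bound on $h$ in terms of $w$. I would handle it as follows. Since $w-h \in H^1_0(B_1)$, Poincaré gives
\[
\|h\|_{L^2} \le \|w\|_{L^2} + C_P \|\nabla(w-h)\|_{L^2} \le C + C_P \sqrt{\delta \Lambda\,(|B_1| + \|h\|_{L^2}^2)} .
\]
If $C_P\sqrt{\delta\Lambda} \le 1/2$, the $\|h\|_{L^2}$ term can be absorbed into the left side, so $\|h\|_{L^2} \le C(d)$. Feeding this back into the energy inequality yields
\[
\|w-h\|_{L^2(B_1)}^2 \le C(d)\,\delta\Lambda .
\]

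\emph{Step 3: conclude.} Smallness of $\delta$ also gives $\fint_{B_1} h^2 \le 4$, so Step 1 applies to $h$. We need $|h(0)| \le C_0 \le M$, which holds. Then
\[
\rho^{-2\alpha}\fint_{B_\rho}|w-h(0)|^2 \le 2\rho^{-2\alpha}\fint_{B_\rho}|h-h(0)|^2 + 2\rho^{-2\alpha-d}|B_1|^{-1}\|w-h\|_{L^2}^2 \le \tfrac12 + C(d)\,\rho^{-2\alpha-d}\,\delta\Lambda .
\]
Finally choose $\delta_1(d,\alpha,\Lambda)$ so that this last term is at most $1/2$. Then \eqref{eq:Hölder_lemma} holds with $m = h(0)$.

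This is a direct argument, so no contradiction or compactness is needed in the end. Iterating it by rescaling will give \cref{prop:Hölder_proposition}.
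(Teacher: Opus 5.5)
Your proposal is correct and follows essentially the same route as the paper: harmonic replacement $h$, the energy comparison $\int_{B_1}|\nabla(w-h)|^2\le \delta\Lambda\int_{B_1}(1+h^2)$, Poincar\'e with absorption for small $\delta\Lambda$, then $m=h(0)$, with $\rho$ fixed from the harmonic decay $\fint_{B_\rho}|h-h(0)|^2\le C\rho^2$ and $\delta_1$ chosen last. The only quibble is your opening sentence announcing a compactness--contradiction argument that you never use; the proof you actually give is direct, as in the paper.
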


\begin{proof}[Proof of \Cref{lem:Hölder_lemma}]
Constants $C=C(d)$ may change from line to line. Let $h$ be the harmonic
replacement of $w$ in $B_1$. Now
\[
    \int_{B_1} |\nabla w|^2+\delta G(x,w)\,dx
    \leq
    \int_{B_1} |\nabla h|^2+\delta G(x,h)\,dx .
\]
Since $h-w\in H_0^1(B_1)$ and $h$ is harmonic,
\[
    \int_{B_1} |\nabla w|^2\,dx
    =
    \int_{B_1} |\nabla h|^2\,dx
    +
    \int_{B_1} |\nabla(w-h)|^2\,dx .
\]
Therefore, using $G\geq0$,
\[
\begin{aligned}
    \int_{B_1}|\nabla(w-h)|^2\,dx
    &\leq
    \delta\int_{B_1}\bigl(G(x,h)-G(x,w)\bigr)\,dx \leq
    \Lambda\delta\int_{B_1}(1+h^2)\,dx .
\end{aligned}
\]
Also, using \eqref{eq:Hölder_lemma_assumption} and Poincaré
\[
    \int_{B_1} h^2\,dx
    \leq
    C\int_{B_1} w^2\,dx
    +
    C\int_{B_1} |w-h|^2\,dx
    \leq
    C+
    C\int_{B_1}|\nabla(w-h)|^2\,dx .
\]
Thus
\[
    \int_{B_1}|\nabla(w-h)|^2\,dx
    \leq
    C\Lambda\delta
    \left(
        1+\int_{B_1}|\nabla(w-h)|^2\,dx
    \right).
\]
Choosing $\delta_1\leq c(d)\Lambda^{-1}$, we absorb and obtain
\[
    \int_{B_1}|\nabla(w-h)|^2\,dx
    +
    \int_{B_1}|w-h|^2\,dx
    \leq
    C\Lambda\delta .
\]
Consequently,
\[
    \fint_{B_1} h^2\,dx\leq C .
\]
Since $h$ is harmonic, $h^2$ is subharmonic, and thus
\[
    |h(0)|^2\leq C\fint_{B_1}h^2\,dx\leq C .
\]
Set
\[
    m:=h(0).
\]
Thus $|m|\leq M$ for $M=M(d)$.

For $0<\rho<1/2$, using $h$ is harmonic
\[
    \fint_{B_\rho}|h-h(0)|^2\,dx
    \leq
    C\rho^2 \left(\fint_{B_1}h^2\,dx  + |h(0)|^2\right)
    \leq
    C\rho^2 .
\]
Hence
\[
\begin{aligned}
    \fint_{B_\rho}|w-m|^2\,dx
    &\leq
    C\fint_{B_\rho}|w-h|^2\,dx
    +
    C\fint_{B_\rho}|h-h(0)|^2\,dx       \leq
    C\rho^{-d}\Lambda\delta
    +
    C\rho^2 .
\end{aligned}
\]
Choose $\rho=\rho(d,\alpha)\in(0,1/2)$ so that
\[
    C\rho^2\leq \frac12\rho^{2\alpha}.
\]
Then choose $\delta_1=\delta_1(d,\alpha,\Lambda)$ smaller so that
\[
    C\rho^{-d}\Lambda\delta_1
    \leq
    \frac12\rho^{2\alpha}.
\]
\end{proof}

\begin{proof}[Proof of \Cref{prop:Hölder_proposition}]
Constants $C=C(d,\alpha,K)$ may change from line to line. Let $\rho$ and $M$
be as in \Cref{lem:Hölder_lemma}, and set
\[
    R:=\frac{M}{1-\rho^\alpha},
    \qquad
    \Lambda:=K(3+2R^2),
    \qquad
    \delta_0:=\delta_1(d,\alpha,\Lambda).
\]

We first prove the estimate at the origin. Define
\[
    u_0:=u,\qquad p_0:=0,\qquad
    F_0:=F.
\]
Following directly from \Cref{lem:Hölder_lemma}, one obtain $m_1 \in \R$ s.t. 
\[|m_1| \leq M, \qquad \fint_{B_\rho} |u_0 - m_1|^2\, dx \leq \rho^{2\alpha}.\]
The idea is to construct inductively 
\begin{align*}
     p_k&=\sum_{j=1}^k \rho^{(j-1)\alpha}m_j \in \R, \\
      u_k(x)&\defeq \rho^{-k\alpha}\bigl(u(\rho^kx)-p_k\bigr),\\
        F_k(x,z)
    &\defeq
    \rho^{2k(1-\alpha)}
    F\bigl(\rho^kx,p_k+\rho^{k\alpha}z\bigr).
\end{align*}
Assume the construction is done up to step $k$, with
\[\fint_{B_1} u_k^2\, dx \leq 1.\]
Since $|m_j|\leq M$,
\[
    |p_k|
    \leq
    M\sum_{j=1}^k \rho^{(j-1)\alpha}
    \leq R .
\]
Hence
\[
\begin{aligned}
    0\leq F_k(x,z)
    &\leq
    K\rho^{2k(1-\alpha)}
    \left(1+\left|p_k+\rho^{k\alpha}z\right|^2\right)  \leq
    K(1+2R^2+2z^2)
    \leq
    \Lambda(1+z^2).
\end{aligned}
\]
Moreover, by scaling, $u_k$ minimizes
\[
    \int_{B_1}|\nabla v|^2+\delta F_k(x,v)\,dx
\]
among competitors with the same trace as $u_k$. Thus \Cref{lem:Hölder_lemma} gives $m_{k+1}\in \R$ such that
\[
    |m_{k+1}|\leq M,
    \qquad
    \fint_{B_\rho}|u_k-m_{k+1}|^2\,dx
    \leq
    \rho^{2\alpha}.
\]
Set
\[
    p_{k+1}:=p_k+\rho^{k\alpha}m_{k+1}.
\]
Define
\[
    u_{k+1}(x)
    :=
    \rho^{-\alpha}\bigl(u_k(\rho x)-m_{k+1}\bigr)
    =
    \rho^{-(k+1)\alpha}
    \bigl(u(\rho^{k+1}x)-p_{k+1}\bigr).
\]
Then
\[
    \fint_{B_1}u_{k+1}^2\,dx
    =
    \rho^{-2\alpha}
    \fint_{B_\rho}|u_k-m_{k+1}|^2\,dx
    \leq1 .
\]
This closes the induction.

From the definition of $u_k$,
\[
    \fint_{B_{\rho^k}} |u-p_k|^2\,dx
    \leq
    \rho^{2k\alpha}.
\]
Since
\(
    |p_\ell-p_k|
    \leq
    M\sum_{j=k+1}^{\ell}\rho^{(j-1)\alpha},
\)
the sequence $p_k$ converges. Let
\[
    p_\infty:=\lim_{k\to\infty}p_k .
\]
Moreover,
\(
    |p_\infty-p_k|
    \leq
    C\rho^{k\alpha}.
\)
Therefore
\[
\begin{aligned}
    \left(\fint_{B_{\rho^k}}|u-p_\infty|^2\,dx\right)^{1/2}
    &\leq
    \left(\fint_{B_{\rho^k}}|u-p_k|^2\,dx\right)^{1/2}
    +
    |p_k-p_\infty|                                     \leq
    C\rho^{k\alpha}.
\end{aligned}
\]
If $0<r<1/2$, choose $k$ such that
\(
    \rho^{k+1}<r\leq \rho^k.
\)
Then
\[
\begin{aligned}
    \left(\fint_{B_r}|u-p_\infty|^2\,dx\right)^{1/2}
    &\leq
    C\left(\frac{\rho^k}{r}\right)^{d/2}
    \left(\fint_{B_{\rho^k}}|u-p_\infty|^2\,dx\right)^{1/2}  \leq
    C\rho^{k\alpha}
    \leq
    Cr^\alpha .
\end{aligned}
\]
Thus the precise representative satisfies
\[
    u(0):=p_\infty,
    \qquad
    |u(0)|\leq R,
\]
and
\begin{equation}\label{eq:origin_campanato_holder}
    \left(\fint_{B_r}|u-u(0)|^2\,dx\right)^{1/2}
    \leq
    Cr^\alpha,
    \qquad 0<r<1/2 .
\end{equation}
For any other point $x_0\in \overline{B_{1/2}}$, apply \eqref{eq:origin_campanato_holder} to
\[
    \widetilde u(y):=2^{-d/2}u\left(x_0+\frac y2\right),
    \qquad y\in B_1 .
\]
Thus by Campanato's characterization \cite[Theorem 3.1]{HL11},
\[
    \norm{u}_{C^{0,\alpha}(\overline{B_{1/2}})}
    \leq C .
\]
\end{proof}

As a result, one may apply \Cref{prop:Hölder_proposition} to minimizers of Alt--Phillips Problem and obtain $C^{0, \alpha}$ regularity.
\begin{corollary}\label{cor:Hölder_cor}
    Let $u \in H^1(B_1)$ minimize $\J^\gamma$ among $H^1(B_1)$ with same boundary data.  Then for any $\alpha \in (0, 1)$, $u \in C_{\text{loc}}^{0, \alpha}(B_1)$. In particular there exists $C> 0$ depending on $d, \alpha, \gamma$ s.t. 
    \[\norm{u}_{C^{0, \alpha}(\overline{B_{1/2}})} \leq C\left(1 + \norm{u}_{L^2(B_1)}\right).\]
\end{corollary}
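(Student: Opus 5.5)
We reduce to \Cref{prop:Hölder_proposition} by normalizing $u$ and verifying that the rescaled potential has at most quadratic growth with a small coefficient.

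\textbf{Normalization.} Set
\[
  A\defeq \Bigl(\fint_{B_1}u^2\Bigr)^{1/2}+ \Lambda_0,
\]
where $\Lambda_0\geq 1$ will be chosen depending on $d,\alpha,\gamma$ (through $\delta_0$). Define $v\defeq u/A$. Then $\fint_{B_1}v^2\leq 1$. Dividing $\J^\gamma$ by $A^2$ shows that $v$ minimizes
\[
  \int_{B_1}|\nabla w|^2+A^{\gamma-2}\,w_+^\gamma\,dx
\]
among $w$ with the same trace as $v$. This holds because $(Aw)_+^\gamma=A^\gamma w_+^\gamma$, and the rescaling $w\mapsto Aw$ is a bijection of the admissible class.

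\textbf{Matching the hypotheses of \Cref{prop:Hölder_proposition}.} Write
\[
  \delta\defeq A^{\gamma-2}, \qquad F(x,z)\defeq z_+^\gamma .
\]
Since $\gamma\in(0,1)$, we have $0\leq z_+^\gamma\leq 1+z^2$, so $F$ satisfies the growth bound with $K=1$. Let $\delta_0=\delta_0(d,\alpha,1)$ be given by the proposition. Because $\gamma-2<0$ and $A\geq\Lambda_0$, choosing $\Lambda_0\defeq \delta_0^{-1/(2-\gamma)}$ ensures $\delta=A^{\gamma-2}\leq\Lambda_0^{\gamma-2}=\delta_0$. So $\Lambda_0$ depends only on $d,\alpha,\gamma$, and \Cref{prop:Hölder_proposition} applies to $v$, giving
\[
  \norm{v}_{C^{0,\alpha}(\overline{B_{1/2}})}\leq C(d,\alpha).
\]

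\textbf{Undoing the scaling.} Multiplying by $A$ gives
\[
  \norm{u}_{C^{0,\alpha}(\overline{B_{1/2}})}\leq CA\leq C\bigl(1+\norm{u}_{L^2(B_1)}\bigr),
\]
since $\bigl(\fint u^2\bigr)^{1/2}=|B_1|^{-1/2}\norm{u}_{L^2}$ and $\Lambda_0$ depends only on $d,\alpha,\gamma$.

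\textbf{Local statement.} For $u\in C^{0,\alpha}_{\mathrm{loc}}(B_1)$, fix $x_0\in B_1$ and $r$ with $B_{r}(x_0)\subset B_1$. The rescaling $u_r(y)=u(x_0+ry)$ minimizes
\[
  \int_{B_1}|\nabla w|^2+r^2w_+^\gamma\,dx,
\]
which is of the same form with potential $r^2 z_+^\gamma$. The coefficient $r^2\leq 1$ only makes the smallness easier to achieve. Repeating the normalization above gives a $C^{0,\alpha}$ bound on $B_{r/2}(x_0)$, and a covering argument finishes the proof.

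\textbf{Main obstacle.} The only delicate point is that the perturbation size $\delta$ is not small a priori. The fix is the floor $\Lambda_0$ in $A$: it makes $A^{\gamma-2}$ small, and this works precisely because $\gamma<2$. It is also what produces the additive constant $1$ in the estimate.
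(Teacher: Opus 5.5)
Your proof is correct and follows the paper's argument. You normalize $u$ by its $L^2$ size plus a large constant depending only on $d,\alpha,\gamma$, and you check that the rescaled potential satisfies $0\le F\le 1+z^2$ with coefficient at most $\delta_0$. You then apply the perturbative H\"older proposition and handle the local statement by rescaling to balls $B_r(x_0)\subset B_1$.

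The only difference is cosmetic. You take $\delta=A^{\gamma-2}$ with $F=z_+^\gamma$, whereas the paper splits the coefficient as $\delta=a^{-1}$ and $F_a=a^{\gamma-1}v_+^\gamma\le v_+^\gamma$, using $a\ge 1$.
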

\begin{proof}
Constants $C=C(d,\alpha,\gamma)$ may change from line to line. Take $A=A(d,\alpha,\gamma)\geq1$ large, and define
\[
    a:=A\left(1+\norm{u}_{L^2(B_1)}\right),
    \qquad
    \widetilde u:=\frac{u}{a},
\]
 so that
\[
    \fint_{B_1}\widetilde u^2\,dx
    \leq1 .
\]
Dividing the energy by $a^2$, we see that $\widetilde u$ minimizes
\[
    \int_{B_1}|\nabla v|^2+  \delta F_a(v)  \,dx, \qquad   \delta:=a^{-1},
    \qquad
    F_a(v):=a^{\gamma-1}v_+^\gamma,
\]
among competitors with the same trace as
$\widetilde u$. Since $a\geq1$ and $0<\gamma<1$,
\[
    0\leq F_a(v)
    \leq
    v_+^\gamma
    \leq
    1+v^2 .
\]
Choose $A$ larger if necessary so that
\[
    \delta=a^{-1}\leq A^{-1}\leq \delta_0,
\]
where $\delta_0$ is the constant in \Cref{prop:Hölder_proposition} with $K=1$.
Thus \Cref{prop:Hölder_proposition} gives
\[
    \norm{\widetilde u}_{C^{0,\alpha}(\overline{B_{1/2}})}
    \leq C .
\]
Multiplying by $a=A\left(1+\norm{u}_{L^2(B_1)}\right)$ yields
\[
    \norm{u}_{C^{0,\alpha}(\overline{B_{1/2}})}
    \leq
    C\left(1+\norm{u}_{L^2(B_1)}\right).
\]
The same argument on balls $B_r(x_0)\Subset B_1$ gives
\(
    u\in C^{0,\alpha}_{\mathrm{loc}}(B_1).
\)
\end{proof}

\section{Optimal $C^{1, \beta -1}$ Regularity}

We state the main goal of the article.
\begin{theorem}\label{thm:optimal_regularity}
     Let $u \in H^1(B_1)$, $u \geq 0$, minimize $\J^\gamma$ among $H^1(B_1)$ with same boundary data. Then $u \in C^{1, \beta - 1}_{\text{loc}}(B_1)$. In particular there exists $C > 0$ depending on $d, \gamma$ s.t. 
     \[\norm{u}_{C^{1, \beta - 1}(\overline{B_{1/2}})} \leq C\left(1 + \norm{u}_{L^2(B_1)}\right).\]
\end{theorem}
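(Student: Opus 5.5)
We follow the dichotomy strategy sketched in the introduction. First we normalize: by \Cref{cor:Hölder_cor} (with, say, $\alpha=1/2$), $u$ is continuous in $B_{3/4}$ with $\norm{u}_{L^\infty(B_{3/4})}\leq C(1+\norm{u}_{L^2(B_1)})$. Dividing by $a=A(1+\norm{u}_{L^2(B_1)})$ as in the proof of that corollary, $\widetilde u=u/a$ minimizes $\int|\nabla v|^2+a^{\gamma-2}v_+^\gamma$. The scaling $v\mapsto r^{-\beta}v(rx)$ leaves $\J^\gamma$ invariant, since $2\beta-2=\gamma\beta$. On $\{u>0\}$, which is open by continuity, $u$ solves $\Delta u=\tfrac{\gamma}{2}u^{\gamma-1}$ classically. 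The plan is to prove the following three estimates and combine them:
\begin{itemize}[label={}]
\item (i) $u(x)\leq C\,d(x)^\beta$, where $d(x)=\operatorname{dist}(x,\{u=0\})$;
\item (ii) $|\nabla u(x)|\leq C\,d(x)^{\beta-1}$;
\item (iii) an interior $C^{1,\beta-1}$ estimate in each ball $B_{d(x)/2}(x)$.
\end{itemize}

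\textbf{Key dichotomy lemma.} There exist $\rho,\ \eta>0$ and $L$ large (depending on $d,\gamma$) such that if $w\geq0$ minimizes $\J^\gamma$ in $B_1$ and $\bigl(\fint_{B_1}w^2\bigr)^{1/2}=:\lambda\geq L$, then one of the following holds:
\begin{itemize}[label={}]
\item (a) $\rho^{-\beta}\bigl(\fint_{B_\rho}w^2\bigr)^{1/2}\leq \lambda/2$;
\item (b) $\fint_{B_\rho}|w/\lambda-c|^2\leq \eta$ for some constant $c\geq c_0>0$.
\end{itemize}
To prove it, apply \Cref{lem:Hölder_lemma} and \Cref{prop:Hölder_proposition} to $w/\lambda$. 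This function minimizes $\int|\nabla v|^2+\lambda^{\gamma-2}v_+^\gamma$, a perturbation of size $\delta=\lambda^{\gamma-2}\leq L^{\gamma-2}$ that is small. Hence $w/\lambda$ is $L^2$-close to a harmonic $h$ with $\fint_{B_1}h^2\leq C$ and $h\geq 0$ up to small error. Now consider $c=h(0)$. If $c\geq c_0$, Harnack for $h$ together with the closeness gives (b). If $c<c_0$, then $h\geq0$ and Harnack give $\sup_{B_{2\rho}}h\leq Cc_0$. Choosing $c_0$ and then $\rho$ small so that $C c_0\leq\rho^\beta/4$, and $\delta$ small, gives (a).

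The main obstacle is exactly this case analysis. The $\beta$-decay in (a) requires $\beta>1$ orders of smallness, while a generic harmonic function only decays linearly. So one genuinely needs nonnegativity, which lets one pass from $h(0)$ small to $h$ small on $B_{2\rho}$ via Harnack. One must also check that the constants can be chosen in the order $\rho$, then $c_0$, then $L$.

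\textbf{Iteration.} Fix a point $x_0\in\{u=0\}\cap B_{1/2}$. Rescale $w_k(x)=\rho^{-k\beta}u(x_0+\rho^kx)$, again a minimizer, and iterate the dichotomy while $\lambda_k\geq L$. If alternative (b) never occurs at a free boundary point, we get $\lambda_k\leq\max(L,2^{-k}\lambda_0)\leq C$ for all $k$. In alternative (b), $w_k$ is $L^2$-close to a positive constant, so by continuity $w_k\geq c_0/2>0$ on $B_{\rho/2}$ after a further De Giorgi/Hölder step from \Cref{cor:Hölder_cor}. This contradicts $w_k(0)=0$ once $\eta$ is small. Hence $\fint_{B_r(x_0)}u^2\leq Cr^{2\beta}$, and by \Cref{cor:Hölder_cor} applied to the rescalings, $\sup_{B_r(x_0)}u\leq Cr^\beta$. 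This gives (i) at free boundary points, and at a general positive point $x$ one applies it at the nearest free boundary point.

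\textbf{Interior estimate and conclusion.} Take $x\in\{u>0\}$ and set $r=d(x)/2$. The rescaling $v(y)=r^{-\beta}u(x+ry)$ is bounded by $C$ on $B_1$ by (i), positive, and solves $\Delta v=\tfrac\gamma2 v^{\gamma-1}$. To control the singular right-hand side we need a lower bound $v\geq c>0$ on $B_{1/2}$. For this, run the iteration from $x$ until the stopping case (b) first occurs; there $v$ is close to a positive constant comparable to its value. The dichotomy thus yields a Harnack inequality $\sup v\leq C\inf v$ on balls of radius comparable to $u(x)^{1/\beta}$. If instead $u(x)\ll d(x)^\beta$, one first uses the iteration with center $x$ to shrink the radius to $\sim u(x)^{1/\beta}$ and works on that ball. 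On such a ball the right-hand side $v^{\gamma-1}$ is bounded and smooth in $v$, so Schauder theory gives $\norm{v}_{C^{1,\beta-1}(B_{1/4})}\leq C$. Scaling back yields (ii) and the local seminorm bound for (iii). Finally, the standard patching argument combines the interior bounds with (i) and (ii), using $\nabla u=0$ on $\{u=0\}$, to obtain $[\nabla u]_{C^{0,\beta-1}(B_{1/2})}\leq C$. Undoing the normalization gives the stated dependence on $1+\norm{u}_{L^2(B_1)}$.
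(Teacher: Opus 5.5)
There is a genuine gap in the quantitative bookkeeping of alternative (b). Your overall architecture is the paper's: harmonic-replacement dichotomy, iteration with a stopping index, a Hölder step turning $L^2$-closeness into positivity, Schauder on balls of radius $\sim u(x)^{1/\beta}$, and patching. The trouble is that you use a single radius $\rho$ for both alternatives.

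\begin{itemize}
\item Alternative (a) forces $c_0\lesssim\rho^\beta$. If $h(0)$ is just below $c_0$, Harnack gives $\fint_{B_\rho}h^2\gtrsim h(0)^2$, so the $\beta$-decay requires $h(0)\lesssim\rho^\beta$.
\item The error $\eta$ in (b) is an absolute bound on the same ball $B_\rho$. It must cover constants $c$ of order one, so necessarily $\eta\gtrsim\rho^2$ (take $h=1+x_1/2$).
\item Since $\beta>1$, this gives $\eta\gtrsim\rho^2\gg\rho^{2\beta}\gtrsim c_0^2$. Hence ``once $\eta$ is small'' can never mean $\sqrt\eta\ll c_0$. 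Neither the contradiction with $w_k(0)=0$ nor the bound $w_k/\lambda_k\geq c_0/2$ at the stopping index follows.
\item The pointwise upgrade has the same defect. \Cref{cor:Hölder_cor} applied to $w_k/\lambda_k$ only bounds its oscillation on $B_\rho$ by $C\rho^\alpha\gg c_0$.
\end{itemize}

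The paper avoids this by decoupling the two radii in \Cref{prop:dichotomy_argument}. The decay radius $\xi=1/2$ is fixed first and determines $c_0$. Only then are the tolerance $\ep_*(c_0,C_0)$, the closeness radius $\eta(\ep_*)$, and finally $a_*$ chosen. The $L^2$-closeness is then upgraded in \Cref{lem:staying_away_from_zero} by applying \Cref{prop:Hölder_proposition} to the normalized difference $(w/\lambda-m_0)/\ep_*$. Its perturbation coefficient is of order $\lambda^{\gamma-2}\ep_*^{-2}$, hence small once $\lambda$ is large.

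Alternatively you can keep one radius but make (b) relative to $c$:
\begin{itemize}
\item Harnack gives $|\nabla h|\leq Ch(0)$ on $B_{1/2}$, so $\fint_{B_\rho}|h-h(0)|^2\leq C\rho^2h(0)^2$.
\item Rescale $B_\rho$ to $B_1$ and run the same normalized-difference Hölder step on $(w/(\lambda c)-1)/\ep$.
\item Choose $L$ last, depending on $\rho$, $c_0$ and $\ep$.
\end{itemize}

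Second, your final assembly does not match what you actually prove. Your Schauder step controls $\nabla u$ only on $B_{r(x)}(x)$ with $r(x)=c\,u(x)^{1/\beta}$. This ball can be much smaller than $B_{d(x)/2}(x)$, since nothing in your argument gives $u(x)\gtrsim d(x)^\beta$. So (iii) is not established, and the $d$-based patching is unsupported.

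Patch with $r(\cdot)$ instead, which is what the paper's last step does implicitly:
\begin{itemize}
\item Rescaling gives $|\nabla u(x)|\leq C r(x)^{\beta-1}$ and $[\nabla u]_{C^{0,\beta-1}(B_{r(x)/2}(x))}\leq C$.
\item For $|x-y|=s$ with $s<r(x)/2$ or $s<r(y)/2$, the local bound applies.
\item Otherwise $|\nabla u(x)-\nabla u(y)|\leq Cr(x)^{\beta-1}+Cr(y)^{\beta-1}\leq Cs^{\beta-1}$, using $r=0$ and $\nabla u=0$ on $\{u=0\}$ (the latter from the $\beta$-growth).
\end{itemize}

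A minor slip as well: below $L$ the sequence can still grow by the doubling factor $\overline{C}$, so the correct bound is $\lambda_k\leq\max(\overline{C}L,2^{-k}\lambda_0)$.
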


In the following we develop the key Dichotomy Argument. 

\begin{proposition}[Dichotomy Argument]\label{prop:dichotomy_argument}
    For any $\ep \in (0, 1)$, and $\xi \in (0, 1/2]$, there exists 
    \[\eta = \eta(d, \gamma, \ep) \in (0, 1/2), \qquad a_* = a_*(d, \gamma, \ep, \xi) \geq 1, \qquad c_0 =c_0(d, \gamma, \xi) \in (0, 1), \qquad C_0 = C_0(d, \gamma) \geq 1,\]
    s.t.  for any $u \in H^1(B_1)$, $u \geq 0$, s.t.
    \begin{equation}\label{eq:dichotomy_argument_assumption}
        \left(\fint_{B_1} u^2 \right)^{\frac{1}{2}}  = a \geq a_*,
    \end{equation}
    that minimizes $\J^\gamma$ among $H^1(B_1)$ with same boundary data,
    either 
    \begin{equation}\label{eq:dichotomy_argument_1}
        \xi^{-\beta} \left(\fint_{B_\xi} u^2\right)^{\frac{1}{2}} \leq \frac{1}{2}a,
    \end{equation}
    or there exists $m \in \R$, $m >0$ s.t. 
    \begin{equation}\label{eq:dichotomy_argument_2_1}
        c_0 a\leq m \leq C_0 a,
    \end{equation}
    and 
    \begin{equation}\label{eq:dichotomy_argument_2_2}
        \left(\fint_{B_\eta}|u - m|^2\, dx\right)^{\frac{1}{2}} \leq \ep a.
    \end{equation}
\end{proposition}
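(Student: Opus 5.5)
Normalize by setting $\widetilde u := u/a$. Then $\fint_{B_1}\widetilde u^2 = 1$, and as in the proof of \Cref{cor:Hölder_cor}, $\widetilde u\geq 0$ minimizes
\[
\int_{B_1}|\nabla v|^2 + a^{\gamma-2}\, v_+^\gamma \,dx .
\]
The potential coefficient $a^{\gamma-2}$ tends to zero as $a\to\infty$. We may write it as $\delta F$ with $\delta=a^{-1}$ and $F=a^{\gamma-1}v_+^\gamma\le 1+v^2$. The plan is to compare $\widetilde u$ with its harmonic replacement $h$ in $B_1$, exactly as in \Cref{lem:Hölder_lemma}. The same absorption argument gives
\[
\int_{B_1}|\nabla(\widetilde u-h)|^2+|\widetilde u-h|^2 \leq C(d)\,a^{\gamma-2}.
\]
It also gives $\fint_{B_1}h^2\le C(d)$ and $h\ge 0$, because $h$ has nonnegative boundary trace. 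Set $m_0:=h(0)\ge 0$. By subharmonicity of $h^2$ we have $m_0\le C_0=C_0(d)$. Harmonic estimates give $\fint_{B_r}|h-m_0|^2\le C r^2$ for $r\le 1/2$.

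Next we split into cases according to the size of $m_0$ relative to a threshold $c_0$.

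\emph{Case $m_0\ge c_0$.} Take $m:=a m_0$, so that \eqref{eq:dichotomy_argument_2_1} holds. For any $\eta<1/2$,
\[
\Bigl(\fint_{B_\eta}|\widetilde u-m_0|^2\Bigr)^{1/2}\le C\eta^{-d/2}a^{(\gamma-2)/2}+C\eta .
\]
First choose $\eta=\eta(d,\ep)$ so that $C\eta\le\ep/2$. Then choose $a_*$ large so that $C\eta^{-d/2}a_*^{(\gamma-2)/2}\le \ep/2$. Multiplying by $a$ gives \eqref{eq:dichotomy_argument_2_2}. Note that $\eta$ then depends only on $d,\ep$ (harmless to also list $\gamma$), as required.

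\emph{Case $m_0< c_0$.} We aim for \eqref{eq:dichotomy_argument_1}. We have
\[
\Bigl(\fint_{B_\xi}\widetilde u^2\Bigr)^{1/2}\le \Bigl(\fint_{B_\xi}|\widetilde u-h|^2\Bigr)^{1/2}+\Bigl(\fint_{B_\xi}|h-m_0|^2\Bigr)^{1/2}+m_0 .
\]
The first term is at most $C\xi^{-d/2}a^{(\gamma-2)/2}$ and the third is less than $c_0$. The middle term is the main obstacle. The crude bound $C\xi$ is not smaller than $\tfrac12\xi^\beta$ because $\beta>1$, so the linear part of $h$ must be controlled using nonnegativity. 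The tool is the Harnack inequality for the nonnegative harmonic function $h$ in $B_{1/2}$: it gives $\sup_{B_{1/2}}h\le C(d)\,h(0)=C m_0$. Interior estimates then give $|h-m_0|\le C m_0\,|x|$ on $B_{1/4}$, so the middle term is at most $C m_0\xi\le C c_0$. Altogether
\[
\xi^{-\beta}\Bigl(\fint_{B_\xi}\widetilde u^2\Bigr)^{1/2}\le \xi^{-\beta}\bigl(C\xi^{-d/2}a^{(\gamma-2)/2}+C c_0\bigr).
\]
Choose $c_0=c_0(d,\gamma,\xi)$ with $C c_0\xi^{-\beta}\le 1/4$. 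Then enlarge $a_*=a_*(d,\gamma,\ep,\xi)$ so that the error term is at most $\xi^\beta/4$. This yields \eqref{eq:dichotomy_argument_1} after multiplying by $a$.

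The dependencies are consistent: $C_0$ depends on $d$, $\eta$ on $(d,\ep)$, $c_0$ on $(d,\gamma,\xi)$, and $a_*$ on everything. Here $\gamma$ enters only through $\beta$ and through the exponent $a^{(\gamma-2)/2}$. Since $\xi\le1/2$, the Harnack step applies as written. If $\xi>1/4$, we use the Harnack inequality on $B_{3/4}$ instead of $B_{1/2}$.
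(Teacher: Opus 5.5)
Your proof is correct and follows the paper's argument essentially step for step. You normalize by $a$, compare with the harmonic replacement $h$, and split on the size of $h(0)$: in the small case you use Harnack plus interior gradient estimates, and in the large case the $C\eta^2$ oscillation bound. The differences are cosmetic. You keep the sharper error $a^{\gamma-2}$ instead of the paper's $a^{-1}$, you apply Harnack on a fixed ball rather than on $B_{3\xi/2}$, and the absorption step implicitly requires $a_*$ to be large in terms of $d,\gamma$, as in the paper.
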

\begin{proof}
Constants $C=C(d,\gamma)$ may change from line to line. Set
\[
    \tilde{u}:=\frac{u}{a}.
\]
Then
\[
    \fint_{B_1}\tilde{u}^2\,dx=1,
\]
and $\tilde{u}$ minimizes
\[
    \int_{B_1}|\nabla v|^2+\delta F_a(v)\,dx,
    \qquad
    \delta:=a^{-1},
    \qquad
    F_a(v):=a^{\gamma-1}v_+^\gamma,
\]
among competitors with same trace as $\tilde{u}$. Since $a\geq1$ and $0<\gamma<1$,
\[
    0\leq F_a(v)\leq v_+^\gamma\leq 1+v^2.
\]

Let $h$ be the harmonic replacement of $\tilde{u}$ in $B_1$, then $h\geq0$. By minimality and orthogonality,
\[
\begin{aligned}
    \int_{B_1}|\nabla(\tilde{u}-h)|^2\,dx
    & = \int_{B_1}|\nabla \tilde{u}|^2 - |\nabla h|^2\leq
    \delta\int_{B_1}\bigl(F_a(h)-F_a(\tilde{u})\bigr)\,dx   \leq
    \delta\int_{B_1}(1+h^2)\,dx .
\end{aligned}
\]
Moreover, using \eqref{eq:dichotomy_argument_assumption} and Poincaré
\[
    \int_{B_1}h^2\,dx
    \leq
    C\int_{B_1}\tilde{u}^2\,dx
    +
    C\int_{B_1}|\tilde{u}-h|^2\,dx
    \leq
    C+
    C\int_{B_1}|\nabla(\tilde{u}-h)|^2\,dx .
\]
Taking $a_*$ large, hence $\delta=a^{-1}$ small, we absorb and get
\begin{equation}\label{eq:dichotomy_harmonic_close}
    \int_{B_1}|\nabla(\tilde{u}-h)|^2\,dx
    +
    \int_{B_1}|\tilde{u}-h|^2\,dx
    \leq
    C a^{-1}.
\end{equation}
In particular,
\[
    \fint_{B_1}h^2\,dx\leq C,
    \qquad
    0\leq h(0)\leq C.
\]
Set
\[
    m:=ah(0).
\]
Then $m\geq0$, and
\[
    m\leq C_0a.
\]

We distinguish two cases.

First assume
\[
    h(0)\leq c_0,
\]
where $c_0=c_0(d,\gamma,\xi)>0$ will be fixed. Since $h\geq0$ is harmonic,
interior estimates and Harnack gives
\[
    \fint_{B_\xi} |h - h(0)|^2\,dx
    \leq \norm{\nabla h}_{L^\infty(B_\xi)}^2 \xi^2 \leq C\norm{h}_{L^\infty(B_{3\xi/2})}^2 \leq 
    C h(0)^2
    \leq
    Cc_0^2 .
\]
Together with \eqref{eq:dichotomy_harmonic_close},
\[
\begin{aligned}
    \fint_{B_\xi}\tilde{u}^2\,dx
    &\leq
    C\fint_{B_\xi}|\tilde{u}-h|^2\,dx
    +
    C\fint_{B_\xi}|h - h(0)|^2\,dx  + C |h(0)|^2      \leq
    C\xi^{-d}a^{-1}+Cc_0^2 .
\end{aligned}
\]
Thus
\[
    \xi^{-2\beta}\fint_{B_\xi}u^2\,dx
    =
    a^2\xi^{-2\beta}\fint_{B_\xi}\tilde{u}^2\,dx
    \leq
    C\bigl(\xi^{-d-2\beta}a^{-1}+c_0^2\xi^{-2\beta}\bigr)a^2.
\]
Choose $c_0=c_0(d,\gamma,\xi)$ so that
\[
    Cc_0^2\xi^{-2\beta}\leq \frac18,
\]
and then choose $a_*=a_*(d,\gamma,\ep,\xi)$ large so that
\[
    C\xi^{-d-2\beta}a_*^{-1}\leq \frac18 .
\]
Then
\[
    \xi^{-\beta}
    \left(\fint_{B_\xi}u^2\,dx\right)^{1/2}
    \leq
    \frac12 a,
\]
which is \eqref{eq:dichotomy_argument_1}.

Now assume
\[
    h(0)>c_0.
\]
Then
\[
    c_0a\leq m\leq C_0a.
\]
For $0<\eta<1/2$,
\[
    \fint_{B_\eta}|h-h(0)|^2\,dx
    \leq
    C\eta^2\fint_{B_1}h^2\,dx
    \leq
    C\eta^2.
\]
Using \eqref{eq:dichotomy_harmonic_close},
\[
\begin{aligned}
    \fint_{B_\eta}|u-m|^2\,dx
    &=
    a^2\fint_{B_\eta}|\tilde{u}-h(0)|^2\,dx      \leq
    Ca^2\fint_{B_\eta}|\tilde{u}-h|^2\,dx
    +
    Ca^2\fint_{B_\eta}|h-h(0)|^2\,dx      \leq
    C\bigl(\eta^{-d}a^{-1}+\eta^2\bigr)a^2 .
\end{aligned}
\]
Choose $\eta=\eta(d,\gamma,\ep)\in(0,1/2)$ so that
\[
    C\eta^2\leq \frac12\ep^2,
\]
and increase $a_*$ so that
\[
    C\eta^{-d}a_*^{-1}\leq \frac12\ep^2.
\]
Then
\[
    \left(\fint_{B_\eta}|u-m|^2\,dx\right)^{1/2}
    \leq
    \ep a.
\]
This proves \eqref{eq:dichotomy_argument_2_1}--\eqref{eq:dichotomy_argument_2_2}.
\end{proof}

Under the second alternative \eqref{eq:dichotomy_argument_2_2}, we conduct $\alpha$-rescaling to stay a positive distance away from $0$.
\begin{lemma}\label{lem:staying_away_from_zero}
    Let \(0<c_0\leq 1\leq C_0\). There exist
    \[
        \ep_*=\ep_*(d,\gamma,c_0,C_0)\in(0,1),
        \qquad
        \delta_*=\delta_*(d,\gamma,c_0,C_0)\in(0,1),
    \]
    such that for any $u \in H^1(B_1)$ that minimizes
    \[
        \int_{B_1}|\nabla v|^2+\delta F(x,v)\,dx, \qquad   0<\delta\leq \delta_*,
        \qquad
        0\leq F(x,v)\leq C(d,\gamma)(1+v^2),
    \]
    among $v \in H^1(B_1)$ with same boundary data, and satisfies for some \(m_0\in\R\),
      \[
        \left(\fint_{B_1}|u-m_0|^2\,dx\right)^{1/2}\leq \ep_*,
        \qquad
        c_0\leq m_0\leq C_0 ,
    \]
    one has 
    \begin{equation}\label{eq:positive_distance_claim}
        \frac14 c_0\leq u(x)\leq 4C_0
        \qquad \text{in }B_{1/2}.
    \end{equation}
\end{lemma}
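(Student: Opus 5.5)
We compare $u$ with its harmonic replacement and use the Campanato iteration behind \Cref{prop:Hölder_proposition} to get a pointwise (sup) estimate, not just an $L^2$ one. Apply the iteration to $w:=u-m_0$. It minimizes $\int|\nabla v|^2+\delta \widetilde F(x,v)$ with $\widetilde F(x,z)=F(x,z+m_0)$. Since $|m_0|\le C_0$,
\[
0\le \widetilde F(x,z)\le C(d,\gamma)(1+2C_0^2+2z^2)\le K(1+z^2),
\qquad K=K(d,\gamma,C_0).
\]
The key point is that the Campanato estimate is linear in the size of the data once $\delta$ is small relative to that size. So I normalize by $\ep_*$ rather than by $1$: set $\bar w:=w/\ep_*$. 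Then
\[
\fint_{B_1}\bar w^2\le 1,
\]
and $\bar w$ minimizes $\int|\nabla v|^2+(\delta/\ep_*^2)\widetilde F(x,\ep_* v)$. The potential satisfies $0\le \widetilde F(x,\ep_* v)\le K(1+v^2)$ because $\ep_*\le1$. Hence \Cref{prop:Hölder_proposition} applies with $\alpha=1/2$ and this $K$, provided
\[
\delta/\ep_*^2\le\delta_0(d,1/2,K).
\]

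The order of choices is as follows. First fix $\ep_*=\ep_*(d,\gamma,c_0,C_0)$ so that $C\ep_*\le c_0/4$, where $C=C(d,K)$ is the Hölder constant from the proposition. Then set $\delta_*:=\ep_*^2\delta_0$. With these choices the proposition gives
\[
\|\bar w\|_{C^{0,1/2}(\overline{B_{1/2}})}\le C,
\qquad\text{hence}\qquad
\sup_{B_{1/2}}|u-m_0|\le C\ep_*\le c_0/4.
\]
It follows that on $B_{1/2}$
\[
u\ge m_0-c_0/4\ge \tfrac34 c_0\ge\tfrac14 c_0,
\qquad
u\le m_0+c_0/4\le 2C_0\le 4C_0,
\]
which is \eqref{eq:positive_distance_claim}.

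The main obstacle is a bookkeeping check: the $\sup$ norm must be controlled by the $C^{0,\alpha}$ norm with a constant that scales like $\ep_*$. This holds because the proposition bounds the full Hölder norm, including the $L^\infty$ part, by $C$ for the normalized function. One also has to check that the hypothesis "among competitors with the same trace" survives the affine change $v\mapsto (v-m_0)/\ep_*$. It does, since this map is a bijection of $H^1(B_1)$ preserving trace classes, and it changes the Dirichlet energy only by the factor $\ep_*^{-2}$.

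If one prefers to avoid the rescaling trick, a fallback is to redo the harmonic-replacement argument of \Cref{lem:Hölder_lemma} directly with $h$ the harmonic replacement of $u$. That argument gives $\|u-h\|_{L^2}^2\le C\delta$ and $|h-m_0|\le C\ep_*+C\sqrt\delta$ on $B_{3/4}$ by the mean value property. The gap from $L^2$ to pointwise closeness of $u$ to $h$ would still need the Hölder estimate, so the route above is cleaner.
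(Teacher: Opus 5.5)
Your proposal is correct and follows the paper's proof essentially verbatim. Both set $(u-m_0)/\ep_*$, apply \Cref{prop:Hölder_proposition} with exponent $1/2$ and $K=K(d,\gamma,C_0)$, fix $\ep_*$ from the Hölder constant, and then take $\delta_*=\delta_0\ep_*^2$. The only difference is that you require the stricter $C\ep_*\le c_0/4$ where the paper uses $C_1\ep_*\le \frac34 c_0$, which is immaterial, and your fallback paragraph is not needed.
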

\begin{proof}
Constants \(C=C(d,\gamma,c_0,C_0)\) may change from line to line. We apply
\Cref{prop:Hölder_proposition} with exponent \(1/2\).

Let
\[
    \tilde{u}:=\frac{u-m_0}{\ep_*}, \qquad 
    \fint_{B_1} \tilde{u}^2\,dx\leq1.
\]
Moreover \( \tilde{u}\) minimizes
\[
    \int_{B_1}|\nabla v|^2+\mu \widetilde F(x,v)\,dx,
    \qquad
    \mu:=\delta\ep_*^{-2},
    \qquad
    \widetilde F(x,v):=F(x,m_0+\ep_* v),
\]
among competitors with the same trace as \(\tilde{u}\).
Since \(m_0\leq C_0\) and \(\ep_*\leq1\),
\[
\begin{aligned}
    0\leq \widetilde F(x, v)
    &\leq
    C(d,\gamma)\left(1+|m_0+\ep_* v|^2\right)  \leq
    C(d,\gamma,C_0)(1+v^2).
\end{aligned}
\]
Let \(\delta_0\) and \(C_1\) be the constants from
\Cref{prop:Hölder_proposition}, with \(K=C(d,\gamma,C_0)\) and Hölder exponent
\(1/2\). Choose
\[
    \ep_*>0
    \qquad \text{so small that} \qquad
    C_1\ep_*\leq \frac34 c_0,
    \qquad
    C_1\ep_*\leq 3C_0.
\]
Then choose
\[
    \delta_*:=\delta_0\ep_*^2.
\]
Thus
\[
    0<\mu=\delta\ep_*^{-2}\leq \delta_0,
\]
and one may apply \Cref{prop:Hölder_proposition}, so that
\[
    \norm{ \tilde{u}}_{C^{0,1/2}(\overline{B_{1/2}})}
    \leq C_1.
\]
Hence in particular
\[
    |u-m_0|\leq C_1\ep_*
    \qquad \text{in }B_{1/2}.
\]
Therefore, in \(B_{1/2}\),
\[
   \frac14c_0\leq m_0-C_1\ep_*\leq 
    u(x)\leq m_0+C_1\ep_*
    \leq 
    4C_0.
\]
\end{proof}

Now we're ready to prove the main $\beta$-growth rate and Harnack type Inequality.
\begin{proposition}[Harnack Inequality and $\beta$-growth]\label{prop:Harnack_and_beta_growth}
    There exists $c_H \in (0, 1)$, $C_H \geq 1$ and $\eta_H \in (0, 1/2)$ depending only on $d, \gamma$ s.t. for any $u \in H^1(B_1)$, $u \geq 0$ and 
    \[\fint_{B_1} u^2 \leq 1,\] that minimizes $\J^\gamma$ among $H^1(B_1)$ with same boundary data, either $u(0) > 0$ and 
    \begin{equation}\label{eq:Harnack}
        c_H u(0) \leq u(x) \leq C_H u(0), \qquad \forall \ |x| \leq \eta_H u(0)^{\frac{1}{\beta}},
    \end{equation}
    or $u(0) = 0$ and 
    \begin{equation}\label{eq:beta_growth}
        \left(\fint_{B_r} u^2\right)^{\frac{1}{2}} \leq C_H r^\beta, \qquad \forall \ 0 < r < 1/2.
    \end{equation}
\end{proposition}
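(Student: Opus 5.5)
The plan is to iterate \Cref{prop:dichotomy_argument} along the dyadic-type scales $\xi^k$, using the $\beta$-invariant rescalings
\[
u_k(x):=\xi^{-k\beta}u(\xi^k x).
\]
Since $\beta(2-\gamma)=2$, each $u_k$ is again a nonnegative minimizer of $\J^\gamma$ in $B_1$. Fix $\ep:=\ep_*(d,\gamma,c_0,C_0)$ from \Cref{lem:staying_away_from_zero}, and take $\xi=1/2$. This fixes $c_0,C_0,\eta$ and $a_*$, all depending only on $d,\gamma$.

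Define $a_k:=(\fint_{B_1}u_k^2)^{1/2}$, so $a_0\le1$. The aim is the bound $a_k\le A$ for a fixed $A=A(d,\gamma)\ge a_*$, valid as long as the second alternative has not occurred. The induction works as follows. If $a_k\le a_*$, then trivially $a_{k+1}\le \xi^{-\beta-d/2}a_*$. If $a_k\ge a_*$ and the first alternative \eqref{eq:dichotomy_argument_1} holds, then $a_{k+1}\le a_k/2$. So with $A:=\xi^{-\beta-d/2}a_*$ the bound $a_k\le A$ propagates. Let $k_0$ be the first index at which $a_k\ge a_*$ and the second alternative holds. If no such $k_0$ exists, then $a_k\le A$ for all $k$. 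Scaling back gives $(\fint_{B_{\xi^k}}u^2)^{1/2}\le A\xi^{k\beta}$, and hence \eqref{eq:beta_growth} for all $r<1/2$ after adjusting constants. Continuity (\Cref{cor:Hölder_cor}) then forces $u(0)=0$.

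If $k_0$ exists, apply \Cref{lem:staying_away_from_zero} to
\[
w(x):=u_{k_0}(\eta x)/a_{k_0}
\]
with $m_0=m/a_{k_0}\in[c_0,C_0]$. Here $w$ minimizes $\int|\nabla v|^2+\delta F$ with $F=a_{k_0}^{\gamma-1}v_+^\gamma$ and $\delta=\eta^2a_{k_0}^{-1}$, which is $\le\delta_*$ after enlarging $a_*$. The lemma gives
\[
\tfrac14 c_0a_{k_0}\le u_{k_0}\le 4C_0a_{k_0}\quad\text{on } B_{\eta/2}.
\]
Unscaling, $u\sim a_{k_0}\xi^{k_0\beta}$ on $B_{\eta\xi^{k_0}/2}$, with comparability constants $c_0/4$ and $4C_0$. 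In particular $u(0)>0$ and $u(0)\simeq a_{k_0}\xi^{k_0\beta}$, with $a_*\le a_{k_0}\le A$. Consequently $u(0)^{1/\beta}\simeq \xi^{k_0}$, with constants depending on $d,\gamma$. Choosing $\eta_H$ small gives $\{|x|\le\eta_H u(0)^{1/\beta}\}\subseteq B_{\eta\xi^{k_0}/2}$, and the two-sided bound becomes \eqref{eq:Harnack} with $c_H=c_0/(16C_0)$ and $C_H=16C_0/c_0$.

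Two cases remain. The first is $u(0)>0$ with no stopping index. This is impossible, since the first branch forces $u(0)=0$, so the dichotomy in the statement is exhaustive. The second is the edge case where the stopping happens at $k_0=0$ with $a_0\le1<a_*$; this cannot occur, because stopping requires $a_k\ge a_*$. I expect the main obstacle to be the bookkeeping in the middle step: ensuring that $a_{k_0}$ is bounded both above and below, so that the radius $\eta\xi^{k_0}$ is genuinely comparable to $u(0)^{1/\beta}$. The lower bound $a_{k_0}\ge a_*$ and the upper bound $a_{k_0}\le A$ from the induction are exactly what make this comparison work.
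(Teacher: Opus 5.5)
Your proposal is correct and follows essentially the same route as the paper: you iterate the dichotomy on the $\beta$-rescalings $u_{2^{-k}}$, stop at the first scale where the second alternative occurs with $a_k\ge a_*$, apply \Cref{lem:staying_away_from_zero} to $u_{r_k}(\eta\,\cdot)/a_k$, and unscale, using the upper bound on $a_k$ to fit the ball of radius $\eta_H u(0)^{1/\beta}$ inside $B_{\eta r_k/2}$. The only difference is bookkeeping, and it is equivalent and slightly cleaner: you run a direct induction $a_k\le A:=\xi^{-\beta-d/2}a_*$ with a stopping rule defined by the second alternative, whereas the paper takes the first exit from $\mathcal{K}=\{k: a_k\le \overline{C}A+2^{-k}\}$ and then rules out the first alternative at that index.
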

\begin{proof}
Constants \(C=C(d,\gamma)\) may change from line to line. Note $d, \gamma, C_0$ are universal. Fix \(
    \xi:=\frac12, 
\)
thus \(c_0 \) from \Cref{prop:dichotomy_argument}. Decreasing \(c_0\) and increasing \(C_0\) if necessary, assume
\(
    0<c_0\leq1\leq C_0 .
\)
Thus we fix \(\ep_*,\delta_*\) as in
\Cref{lem:staying_away_from_zero}. Apply \Cref{prop:dichotomy_argument} with
\(
    \ep=\ep_*,\
    \xi=\frac12,
\)
and thus we fix \(\eta = \eta(\ep_*)\in(0,1/2)\), \(a_* = a_*(\ep_*, \xi)\geq1\). Set
\[
    A:=\max\{a_*,\delta_*^{-1},1\}.
\]
Therefore all the above are universal, depending only on $d, \gamma$.

For \(0<r<1\), define
\[
    u_r(x):=r^{-\beta}u(rx),
    \qquad
    a(r):=
    r^{-\beta}
    \left(\fint_{B_r}u^2\,dx\right)^{1/2}
    =
    \left(\fint_{B_1}u_r^2\,dx\right)^{1/2}.
\]
Then \(u_r\) minimizes \(\J^\gamma\) in \(B_1\) among $H^1(B_1)$ with same boundary data. We work with sequence 
Let
\[
    r_k:=\xi^k = 2^{-k},
    \qquad
    a_k:=a(r_k).
\]
Notice there exists universal constant $\overline{C} = \overline{C}(d, \gamma) \geq 1$ s.t.
\begin{equation}\label{eq:dyadic_growth_bound}
    a_{k+1}\leq \overline{C}a_k , \qquad \forall \ k \in \N.
\end{equation}
Indeed,
\[
\begin{aligned}
    a_{k+1}
    &=
    r_{k+1}^{-\beta}
    \left(\fint_{B_{r_{k+1}}}u^2\,dx\right)^{1/2}       \leq
    \xi^{-\beta}r_k^{-\beta}
    \xi^{-d/2}
    \left(\fint_{B_{r_k}}u^2\,dx\right)^{1/2}
    =
    \overline{C} a_k .
\end{aligned}
\]

 Define
\[
    \mathcal K
    :=
    \left\{
        k\geq0:
        a_k\leq \overline{C} A+2^{-k}
    \right\}.
\]
Since \(\left(\fint_{B_1}u^2\right)^{\frac{1}{2}}=a_0\leq1\), one has \(0\in\mathcal K\).

Assume first
\[
    \mathcal K=\{0,1,2,\ldots\}.
\]Then \(a_k\leq C\) for every \(k\). If \(r_{k+1}<r\leq r_k\), then
\[
\begin{aligned}
    r^{-\beta}
    \left(\fint_{B_r}u^2\,dx\right)^{1/2}
    &\leq
    \left(\frac{r_k}{r}\right)^{\beta+d/2}
    r_k^{-\beta}
    \left(\fint_{B_{r_k}}u^2\,dx\right)^{1/2}      \leq
    C a_k
    \leq C .
\end{aligned}
\]
Thus \eqref{eq:beta_growth} holds. By Lebesgue Differentiation one has $u(0) = 0$.

Otherwise,  let \(k\) be the first index
such that
\[
    k\in\mathcal K,
    \qquad
    k+1\notin\mathcal K .
\]
Then \(a_k>A\). Indeed, if \(a_k\leq A\), then by
\eqref{eq:dyadic_growth_bound},
\[
    a_{k+1}\leq \overline{C}a_k \leq  \overline{C}A
    \leq
    \overline{C} A +2^{-(k+1)},
\]
contradicting \(k+1\notin\mathcal K\). Now apply \Cref{prop:dichotomy_argument} to \(u_{r_k}\). Since
\[
    \left(\fint_{B_1}u_{r_k}^2\,dx\right)^{1/2}
    =
    a_k>A\geq a_*,
\]
the dichotomy applies.

If the first alternative holds, then
\[
    a_{k+1}
    =
    \xi^{-\beta}
    \left(\fint_{B_\xi}u_{r_k}^2\,dx\right)^{1/2}
    \leq
    \frac12 a_k .
\]
Since \(k\in\mathcal K\),
\[
    a_{k+1}
    \leq
    \frac12(\overline{C} A+2^{-k})
    \leq
\overline{C}A+2^{-(k+1)},
\]
again contradicting \(k+1\notin\mathcal K\). Hence the second alternative holds: there exists \(m\in \R\) such that
\[
    c_0a_k\leq m\leq C_0a_k,
    \qquad
    \left(\fint_{B_\eta}|u_{r_k}-m|^2\,dx\right)^{1/2}
    \leq
    \ep_*a_k .
\]

Set
\[
    \tilde{u}_k(y):=\frac{u_{r_k}(\eta y)}{a_k},
    \qquad
    m_0:=\frac{m}{a_k}.
\]Then
\[
    c_0\leq m_0\leq C_0,
    \qquad
    \left(\fint_{B_1}|\tilde{u}_k-m_0|^2\,dy\right)^{1/2}
    \leq
    \ep_* .
\]
Moreover \(\tilde{u}_k\) minimizes
\[
    \int_{B_1}|\nabla v|^2+\delta_k F_k(v)\,dy,
    \qquad
    \delta_k:=a_k^{-1}, \qquad
    F_k(v):=\eta^2 a_k^{\gamma-1}v_+^\gamma .
\]Since \(a_k\geq A\geq1\),
\[
    0\leq F_k(v)\leq v_+^\gamma\leq 1+v^2,
    \qquad
    0<\delta_k=a_k^{-1}\leq A^{-1}\leq \delta_* .
\]
Thus \Cref{lem:staying_away_from_zero} gives
\[
    \frac14c_0\leq \tilde{u}_k(y)\leq 4C_0
    \qquad\text{in }B_{1/2}.
\]
Scaling back,
\begin{equation}\label{eq:positive_after_stopping}
    ca_k\leq u_{r_k}(x)\leq Ca_k
    \qquad\text{for } x\in B_{\eta/2}.
\end{equation}
Note $u(0) = 0$ is impossible otherwise $a_k = 0$ contradicting $k+1 \notin \K$. Thus $u(0) > 0$, and in particular
\begin{equation}\label{eq:ak_u0_comparable}
    ca_k\leq r_k^{-\beta}u(0)\leq Ca_k.
\end{equation}

Since \(k\in\mathcal K\),
\[
    a_k\leq \overline{C} A+2^{-k}\leq C.
\]
Therefore, from \eqref{eq:ak_u0_comparable},
\[
    r_k\geq c\,u(0)^{1/\beta}.
\]
Choose \(\eta_H\in(0,1/2)\), depending only on \(d,\gamma\), so small that
\[
    \eta_H u(0)^{1/\beta}
    \leq
    \frac{\eta}{2} r_k .
\]
Therefore using \eqref{eq:positive_after_stopping},
\[
    cr_k^\beta a_k
    \leq
    u(x)
    \leq
    Cr_k^\beta a_k, \qquad  |x| \leq    \eta_H u(0)^{1/\beta}.
\]
Using \eqref{eq:ak_u0_comparable} once more one obtain
\[
    c_Hu(0)\leq u(x)\leq C_Hu(0),
    \qquad
    |x|\leq \eta_Hu(0)^{1/\beta}.
\]
\end{proof}

Finally we're able to conclude the proof of Optimal Regularity.

\begin{proof}[Proof of \Cref{thm:optimal_regularity}]
   Constants $C=C(d,\gamma)$ may change from line to line. Dividing $u$ by a universal multiple of $1 + \norm{u}_{L^2(B_1)}$ so that $\fint_{B_1} u^2 \leq 1$ and it minimizes energy of the form \[
    \mathcal J^\gamma_\lambda(v)
    :=
    \int_{B_1} |\nabla v|^2+\lambda v_+^\gamma,
    \qquad 0\le \lambda\le1.
\] 
   All preceding propositions remain valid for this $\J^\gamma_\lambda$ with the same constants.  For any $x \in \{u > 0\} \cap B_{1/2}$, using \eqref{eq:Harnack}, we work with $B_\rho(x) \Subset \{u > 0\}$ where $\rho = \frac{1}{2}\eta_H u(x)^{\frac{1}{\beta}}$. Recall from \cite{AP86} that $u$ solves 
   \[\D u = \frac{\gamma}{2}u^{\gamma - 1} \chi_{\{u > 0\}}.\] Then using interior elliptic estimate (see \cite{FRRO22}) in the positive set
   \begin{align*}
       \norm{\nabla u}_{C^{0, \beta - 1}(B_{\rho/2}(x))} &\leq C \frac{1}{\rho^{\beta}} \left(\norm{u}_{L^\infty(B_\rho(x))} + \rho^2\norm{u^{\gamma -1}}_{L^\infty(B_\rho(x))}\right) \leq C\left(\rho^{-\beta}u(x) + \rho^{2 - \beta} u(x)^{\gamma -1}\right)\\
       &\leq C\left(1 + \rho^{2 - \beta + \beta(\gamma - 1)}\right)= C.
   \end{align*}
   For points $x \in \p\{u > 0\} \cap B_{1/2}$, one directly apply $\beta$-growth rate \eqref{eq:beta_growth}. Combining both, one conclude using Campanato's Method.
\end{proof}

\section*{Acknowledgments}
The author would like to thank Daniela De Silva and Ovidiu Savin for the fruitful discussions.

\bibliographystyle{alpha}
\bibliography{references}

\end{document}